\theoremstyle{plain}
\newtheorem{thm}{Theorem}
\newtheorem{lem}{Lemma}
\newtheorem{cor}{Corollary}
\theoremstyle{definition}
\theoremstyle{remark}
\def\dist{\mathrm {dist}\, } 
\def\Per{{\rm Per}\, }
\def\PerM{{\rm Per}_M}
\def\RR{\mathbb R}
\def\Vol{{\rm Vol}\, }
\begin{document}

\title[Perimeter under multiple Steiner symmetrizations]
{Perimeter under multiple\\Steiner symmetrizations}

\author{Almut Burchard and Gregory R. Chambers}
\address{University of Toronto, Department of Mathematics,
40 St. George Street, Toronto, Canada M5S 2E4.
{\tt \{almut,chambers\}@$\,$math.toronto.edu}}
\date{September 19, 2012}
\begin{abstract}
Steiner symmetrization along $n$ linearly independent directions
transforms every compact subset of $\RR^n$
into a set of finite perimeter.
\end{abstract} \maketitle

Steiner symmetrization is a volume-preserving rearrangement 
that introduces a hyperplane of symmetry.
A key property is that Steiner symmetrization
strictly reduces the perimeter of every convex set 
that is not already reflection symmetric~\cite{S}. 
The perimeter of a non-convex set of finite 
perimeter decreases strictly under Steiner 
symmetrization in most directions, but not necessarily
in all of them~\cite{CCF}. 

We seek to bound the perimeter of an arbitrary compact set 
$A\subset \RR^n$ after a finite sequence of Steiner 
symmetrizations.  Our main result is that $n$ consecutive
Steiner symmetrization in linearly independent directions suffice 
to transform $A$ into a set of finite perimeter. 

\begin{thm} [Perimeter estimate] \label{thm:per} 
If $A\subset\RR^n$ is a compact set and $u_1,\dots, u_n$ 
are linearly independent unit vectors in $\RR^n$, then
\begin{equation}\label{eq:per}
\Per(S_{u_n}\dots S_{u_1}A) \ \le\ 
\frac{a_nR^{n-1}}{|\det\,(u_1,\dots, u_n)\,|}\,,
\end{equation}
where $a_n=2n\omega_{n-1}$, and $R$ is the outradius of $A$.
\end{thm}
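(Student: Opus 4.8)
The plan is to argue by induction on the dimension $n$, peeling off the last symmetrization $S_{u_n}$ and exploiting that the resulting set is a symmetric subgraph region over the hyperplane $u_n^{\perp}$.

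I first record the normalizations. Since $S_u$ carries a ball $B_R(p)$ to the ball $B_R(\pi_{u^{\perp}}p)$ and is monotone under inclusion, every intermediate set $E_k:=S_{u_k}\cdots S_{u_1}A$ is contained in a ball of radius $R$; in particular each $E_k$ has $(n-1)$-dimensional shadows of area at most $\omega_{n-1}R^{n-1}$ and extent at most $2R$ in every direction. I will also use Hadamard's inequality $\delta:=|\det(u_1,\dots,u_n)|\le 1$, the observation that subtracting multiples of $u_n$ from the other columns gives $|\det_{u_n^{\perp}}(\pi_{u_n^{\perp}}u_1,\dots,\pi_{u_n^{\perp}}u_{n-1})|=\delta$ as well, and the elementary inequality $|\det(u_1,\dots,u_n)|\le\sum_i|\langle\nu,u_i\rangle|$ for every unit vector $\nu$ (write $\nu$ in the basis $(u_i)$ by Cramer's rule and bound each numerator by Hadamard). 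For $n=1$, $S_{u_1}A$ is a possibly empty symmetric interval of perimeter at most $2=a_1R^{0}/|u_1|$, which settles the base case.

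For the inductive step, choose coordinates with $u_n=e_n$ and write $E_n=\{(y,s)\in u_n^{\perp}\times\mathbb R:\ 2|s|<H(y)\}$, where $H(y)=\mathcal H^{1}\bigl(E_{n-1}\cap(y+\mathbb R u_n)\bigr)$ is a nonnegative, compactly supported function bounded by $2R$; this is the meaning of $E_n=S_{u_n}(E_{n-1})$. The area formula for subgraphs together with $\sqrt{4+t^{2}}\le 2+|t|$ gives
\[
\Per(E_n)\ \le\ 2\,\mathcal H^{n-1}(\{H>0\})+\|DH\|(u_n^{\perp})\ \le\ 2\,\omega_{n-1}R^{n-1}+\|DH\|(u_n^{\perp}),
\]
the first term being $V_{u_n}(E_n)$, controlled by the shadow of $E_n$. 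To estimate the second term, write $H(y)=\int_{\mathbb R}\mathbf 1_{E_{n-1}}(y+s\,u_n)\,ds$ and use that the total variation of an average does not exceed the average of the total variations, so that
\[
\|DH\|(u_n^{\perp})\ \le\ \int_{\mathbb R}\mathrm{Per}_{u_n^{\perp}}\!\bigl(E_{n-1}\cap(u_n^{\perp}+s\,u_n)\bigr)\,ds .
\]
One then views each slice of $E_{n-1}$ by a hyperplane orthogonal to $u_n$ as a compact set in that $(n-1)$-dimensional hyperplane acted on by $n-1$ (in general oblique) Steiner symmetrizations in the directions induced there by $u_1,\dots,u_{n-1}$; the slice at height $s$ sits in a ball of radius $\rho_s\le\bigl(R^{2}-(s-s_0)^{2}\bigr)_{+}^{1/2}$, the relevant determinant equals $\delta$ by the remark above, and the inductive hypothesis bounds its perimeter by $a_{n-1}\rho_s^{\,n-2}/\delta$. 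Integrating in $s$ via $\int_{-1}^{1}(1-t^{2})^{(n-2)/2}\,dt=\omega_{n-1}/\omega_{n-2}$ and using $a_{n-1}=2(n-1)\omega_{n-2}$ yields $\|DH\|(u_n^{\perp})\le 2(n-1)\omega_{n-1}R^{n-1}/\delta$; combined with the display above and $\delta\le 1$,
\[
\Per(E_n)\ \le\ 2\,\omega_{n-1}R^{n-1}+\frac{2(n-1)\,\omega_{n-1}R^{n-1}}{\delta}\ \le\ \frac{2n\,\omega_{n-1}R^{n-1}}{\delta}\ =\ \frac{a_nR^{n-1}}{\delta},
\]
which is \eqref{eq:per}.

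I expect the genuine difficulty to be exactly the step that identifies the hyperplane slices of $E_{n-1}$ with iterated symmetrizations inside the slicing hyperplane, and shows that the perimeter estimate for such oblique symmetrizations still loses only the factor $\delta^{-1}$: a symmetrization in a direction transverse to the slicing hyperplane does not commute with slicing, and $S_{u_j}(F)\cap H$ is not determined by $F\cap H$, so this will require a careful description of how the fiber-length functions propagate through the symmetrizations (a naive attempt through the monotonicity of the directional total variations $V_w$ under $S_{u_j}$ fails, since that monotonicity is false once $w$ is neither parallel nor orthogonal to $u_j$). The linear-algebra inequality recorded above provides a clean alternative bookkeeping device: it gives $\Per(E)\le\delta^{-1}\sum_i V_{u_i}(E)$ for any set of finite perimeter, so that — once finiteness is established — the theorem reduces to the single-direction estimates $V_{u_i}(E_n)\le 2\,\omega_{n-1}R^{n-1}$, and the induction above is one route to them.
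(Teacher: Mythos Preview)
Your inductive scheme is attractive and would work cleanly in the orthogonal case $u_i=e_i$, but the step you flag as ``the genuine difficulty'' is a genuine gap, not a technicality. When $u_1,\dots,u_{n-1}$ are not orthogonal to $u_n$, the hyperplane slice $E_{n-1}\cap(u_n^\perp+su_n)$ is simply not an iterated Steiner symmetrization of anything: each $S_{u_j}$ moves mass along lines transverse to the slicing hyperplanes, so the slice at height $s$ depends on all of $A$, not on any single slice of it. There is no ``oblique Steiner symmetrization inside the slice'' that reproduces $E_{n-1}\cap(u_n^\perp+su_n)$, and hence the inductive hypothesis cannot be invoked on the slices. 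Your alternative route through $\Per(E)\le\delta^{-1}\sum_i V_{u_i}(E)$ (the inequality itself is fine, via $1=\langle\nu,\nu\rangle\le(\max_i|c_i|)\sum_i|\langle\nu,u_i\rangle|$ and Cramer--Hadamard) does not close the loop either: it needs both finiteness of $\Per(E_n)$ and the single-direction bounds $V_{u_i}(E_n)\le 2\omega_{n-1}R^{n-1}$ for $i<n$. The latter hold for $i=n$ because $E_n$ is $u_n$-symmetric, but for $i<n$ the set $E_n$ is no longer $u_i$-symmetric, and you correctly note that $V_{u_i}$ need not decrease under $S_{u_n}$. So neither branch of your argument yields the estimate for non-orthogonal directions.

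The paper avoids slicing altogether. It works with parallel sets: using $S_u(A+C)\supset S_uA+S_uC$ one shows that adding a short segment $L_{\beta u_k}$ after $S_{u_k}$ increases volume by at most $2\omega_{n-1}R^{n-1}\beta$, and iterating over $k=1,\dots,n$ builds a parallelepiped $C_n=\sum_i S_{u_n}\cdots S_{u_{i+1}}L_{\beta u_i}$ that contains $B_\delta$ once $\beta\,|\det(u_1,\dots,u_n)|\ge\delta$. This gives a bound on $\Vol(E_n+B_\delta)-\Vol(A)$ linear in $\delta$, hence on the outer Minkowski perimeter, and then $\Per\le\Per_M^+$ finishes. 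The point is that the Minkowski-sum inequality \emph{does} behave well under successive symmetrizations in arbitrary directions, whereas the slicing you attempt does not.
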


The theorem is motivated by the special case
of the coordinate directions $e_1,\dots, e_n$. 
The set $S_{e_n}\dots S_{e_1}A$
is symmetric under reflection at each coordinate
hyperplane, and its intersection with the positive cone
lies under the graph of a monotone function
$x_n=f(x_1,\dots, x_{n-1})$.  The perimeter of such a set
is bounded by twice the sum of the area of 
its projections onto the coordinate hyperplanes, 
which cannot exceed $a_nR^{n-1}$,
see Figure~\ref{fig:ortho}.

\begin{figure}[ht]
\centerline{
\includegraphics[width=1.0\textwidth]{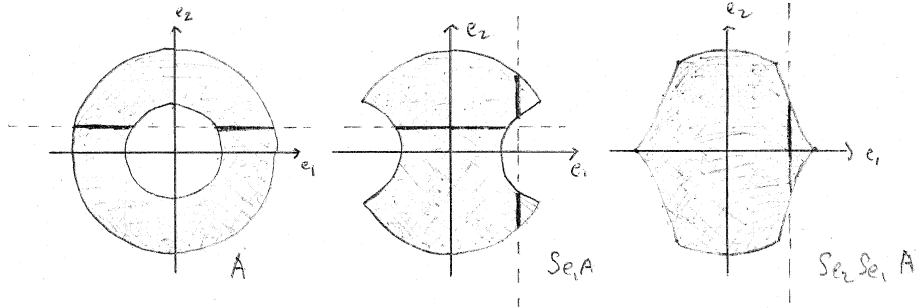}
}
\caption{\small  Steiner symmetrization along the coordinate
directions}
\label{fig:ortho}
\end{figure}

We start with some definitions and notation. 
The dimension $n\ge 2$ will be fixed 
throughout the paper.  The $n$-dimensional 
volume of a Lebesgue measurable set $A\subset \RR^n$ is 
denoted by $\Vol(A)$.  
By $\Per(A)$, we mean the {\bf Caccioppoli perimeter}
of $A$, defined by 
$$
\Per(A) = \sup_{||F||_\infty\le 1}
\int_A {\rm div\,}F(x)\, dx\,,
$$
where the supremum runs over all smooth compactly 
supported vector fields $F$ on $\RR^n$.  If 
$\Per(A)<\infty$, then its value agrees with 
the $(n\!-\!1)$-dimensional Hausdorff measure of the essential 
boundary of $A$. 

We denote by $B$ the closed unit ball in $\RR^n$, 
centered at the origin, and by $\omega_n$ its 
volume.  The closed centered ball of radius $\rho>0$ will
be denoted by  $B_\rho$.  The {\bf Minkowski sum}
of two subsets $A,C\subset\RR^n$ is given by
$$
A+C = \{a+c: a\in A, c\in C\}\,.
$$
Their {\bf Minkowski difference} is the largest set whose 
Minkowski sum with $C$ lies in $A$,
$$
A-C=\{x\in\RR^n: x+c\in A\ \forall c\in C\}\,.
$$
The sets $A+B_\rho$ and $A-B_\rho$ will be called the
{\bf outer} and {\bf inner parallel sets} of $A$.
The {\bf Hausdorff distance} between $A$ and $C$
is given by
$$
d_H(A,C)= \inf\{\delta>0: A\subset C+B_\delta \
\mbox{and}\ C\subset A+B_\delta\}\,.
$$

Let $A\subset \RR^n$ be a compact set, and let
$u\in\RR^n$ be a unit vector.
The {\bf Steiner symmetrization} 
of $A$ in the direction of $u$ is 
defined by the following property.
For each point $x\perp u$,
we compute the length of the intersection of
$A$ with the inverse image of $x$ under 
the orthogonal projection onto the hyperplane $u^\perp$, 
and then replace it with the closed interval 
of the same one-dimensional measure centered on 
$u^\perp$.  If the intersection 
is empty, then the interval is empty; if it is a nonempty set of
measure zero, then the interval consists of a single point.
The resulting set will be denoted by $S_uA$.
Note that $S_uA$ is compact and symmetric under
reflection at $u^\perp$. 
By Cavalieri's principle, $S_uA$ has the same volume as $A$,
i.e., Steiner symmetrization is a volume-preserving 
rearrangement.  The {\bf symmetric rearrangement} of $A$
is the closed centered ball  $A^*$ of the same
volume as $A$.  If $A$ is empty, we take $A^*$ to be empty;
if $A$ is a non-empty set of measure zero, then $A^*=\{0\}$.
We will refer to the radius of $A^*$ as the {\bf volume radius} of $A$.

The corresponding symmetrizations of functions 
are defined by symmetrizing their level sets.
Let $f$ be a nonnegative continuous function with 
compact support. Its Steiner symmetrization $S_uf$ is
determined by the property that 
$$
\{ x: S_uf(x) \ge t\} = S_u \{x: f(x)\ge t\}
$$
for every $t>0$, and its symmetric decreasing 
rearrangement $f^*$ is the unique radially 
decreasing continuous function that is 
equimeasurable to $f$. These symmetrizations
improve the modulus of continuity
and contract distances in the space of continuous functions.  

It is useful to think of Steiner symmetrization
as an operation on the one-dimensional cross sections
$$ 
A(x)= \{t\in\RR: x+tu\in A\}\,
$$
for $x\in u^\perp$.  By definition, 
$$
(S_uA)(x) = (A(x))^*\,,
$$
where $(A(x))^*$ is the one-dimensional symmetric rearrangement of $A(x)$.
Since one-dimensional symmetrization preserves the subset relation,
Stei\-ner symmetrization preserves it as well, and  therefore
\begin{align*}
S_uA\cap S_uC &\ \supset \  S_u(A\cap C)\,,\\
S_uA\cup S_uC &\ \subset \  S_u(A\cup C)\,.
\end{align*}
In particular, the outradius of $S_uA$ is no larger
than the outradius of~$A$. 

Consider a pair of non-empty cross sections $A(x)$ and $C(y)$.
Let $a(x)$ be the leftmost point in $A(x)$, and let
$c(y)$ be the rightmost point in $C(y)$.  Clearly,
$$
A(x)+C(y)\supset (a(x)+C(y)) \cup (A(x)+c(y))\,,
$$ 
with equality when $A(x)$ and $C(y)$
are intervals.  Since the two sets on the right
hand side have only the point
$a(x)+c(y)$ in common, the one-dimensional
measure of $A(x)+C(y)$ is at least as large
as the sum of the measures of $A(x)$ and $C(y)$.
(This is the Brunn-Minkowski inequality
in one dimension).  It follows that
$(A(x))^*+(C(y))^*$ is contained in 
$(A(x)+C(y))^*$,
and therefore
\begin{equation}
\label{eq:M-sum}
S_uA+S_uC \ \subset \ S_u(A+C)\,
\end{equation}
for every pair of compact sets $A,C\subset\RR^n$.
By definition of the Minkowski difference, this in turn
implies that
\begin{equation}
\label{eq:M-diff}
S_uA-S_uC \ \supset \ S_u(A-C)\,.
\end{equation}
In particular, Steiner symmetrization
reduces the volume of outer parallel sets and 
increases the volume of inner parallel sets.  

In the proof of Theorem~\ref{thm:per}, we will
bound the perimeter of $S_{u_n}\dots S_{u_1}A$
in terms of the volume of its parallel sets. 
Specifically, we  will establish Eq.~(\ref{eq:per})
for the {\bf outer Minkowski perimeter}, given by
$$
\PerM^+(A) = \limsup_{\delta\to 0} \frac{1}{\delta}
(\Vol(A+B_\delta)-\Vol(A))\,,
$$
and then argue that $\PerM^+(A)\ge \Per(A)$
for every compact set $A$.
The first lemma concerns the Minkowski sum and difference
of a compact set $A$ with a line segment.

\begin{lem} \label{lem:line}
Let $A\subset\RR^n$ be a compact set,
let $u$ be a unit vector in $\RR^n$, and fix $\beta>0$. Assume
that $S_uA=A$, and consider
the line segment $L_{\beta u}= \{ t u, |t| \leq \beta\}$.
Then, for every $R>0$,
\begin{align*}
\Vol((A+L_{\beta u})
\cap B_R) &\ \le \ \Vol(A\cap B_R) + 2 \omega_{n-1}R^{n-1}\beta\,,\\
\Vol((A-L_{\beta u})\cap B_R) 
          &\ \ge \ \Vol(A\cap B_R) - 2\omega_{n-1}R^{n-1}\beta\,.
\end{align*}
\end{lem}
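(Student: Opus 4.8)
The plan is to slice everything along the direction $u$ and reduce the statement to an elementary one‑dimensional estimate. Since $S_uA=A$, every non‑empty cross section $A(x)$ with $x\in u^\perp$ equals its own one‑dimensional symmetric rearrangement $(A(x))^*$, hence is a closed interval centered at $0$; write its half‑length as $\rho(x)\ge 0$ (so $\rho(x)=0$ when $A(x)=\{0\}$), and leave $\rho$ undefined where $A(x)=\emptyset$. The key point is that the two Minkowski operations with the segment $L_{\beta u}$ act one cross section at a time: since a point $x+tu$ with $x\in u^\perp$ lies in $A+L_{\beta u}$ exactly when $t\in A(x)+[-\beta,\beta]$, and in $A-L_{\beta u}$ exactly when $[t-\beta,t+\beta]\subset A(x)$, one gets for every $x\in u^\perp$
$$
(A+L_{\beta u})(x)=A(x)+[-\beta,\beta]\,,\qquad
(A-L_{\beta u})(x)=\{t\in\RR:[t-\beta,t+\beta]\subset A(x)\}\,,
$$
both empty when $A(x)=\emptyset$, and otherwise the centered intervals of half‑lengths $\rho(x)+\beta$ and $(\rho(x)-\beta)_+$. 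Likewise $B_R(x)$ is the centered interval of half‑length $r(x):=\sqrt{R^2-|x|^2}$ when $|x|\le R$, and empty when $|x|>R$. Because the intersection of two centered intervals is again a centered interval, with half‑length the smaller of the two, Cavalieri's principle turns each of the four volumes in the statement into an integral over $u^\perp$ of such a length; for instance
$$
\Vol\bigl((A+L_{\beta u})\cap B_R\bigr)=\int_{u^\perp}2\min\bigl(\rho(x)+\beta,\,r(x)\bigr)\,dx\,,
$$
the integrand read as $0$ wherever $A(x)=\emptyset$ or $|x|>R$, and analogously for the other three (compactness of $A$ and of $A\pm L_{\beta u}$ supplies the needed measurability).

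The lemma now follows from two elementary inequalities, valid for all $a,r\ge 0$ and $\beta>0$:
$$
\min(a+\beta,r)\ \le\ \min(a,r)+\beta\,,\qquad
\min(a,r)-\min\bigl((a-\beta)_+,\,r\bigr)\ \le\ \beta\,,
$$
each proved by splitting into the handful of cases set by the ordering of $a$, $r$, and $a\pm\beta$. Taking $a=\rho(x)$, $r=r(x)$ in the first and integrating over $\{x\in u^\perp:|x|\le R\}$, whose $(n-1)$‑dimensional volume equals $\omega_{n-1}R^{n-1}$, gives the first bound of the lemma; the second inequality, handled the same way after rewriting the claim as an upper bound for $\Vol(A\cap B_R)-\Vol\bigl((A-L_{\beta u})\cap B_R\bigr)$, gives the second.

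I expect the only delicate part to be the bookkeeping for degenerate cross sections. One has to make sure that where $A(x)=\emptyset$ the cross sections of $A\pm L_{\beta u}$ are genuinely empty, so such $x$ contribute nothing on either side and the slack $2\omega_{n-1}R^{n-1}\beta$ is never over‑counted; that restricting to $|x|\le R$ costs nothing, since for $|x|>R$ all four integrands vanish; and that a single‑point cross section $A(x)=\{0\}$ is absorbed by $(\rho(x)-\beta)_+=0$. No analytic difficulty is anticipated beyond this case analysis.
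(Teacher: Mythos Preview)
Your proposal is correct and follows essentially the same slicing argument as the paper: reduce to one-dimensional cross sections, observe that $S_uA=A$ forces each nonempty $A(x)$ to be a centered interval, compute the cross sections of $A\pm L_{\beta u}$, and integrate over the disk $u^\perp\cap B_R$. The paper's proof is terser---it simply notes the lengths $\ell(x)\pm 2\beta$ and says the claims follow upon integration---whereas you spell out explicitly the intersection with $B_R(x)$ and the two $\min$-inequalities that make the final step rigorous; this is exactly the content the paper leaves to the reader.
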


\begin{proof} 
By assumption, each non-empty
cross section $A(x)$ is either
a centered interval of positive
length $\ell(x)$, or a single point, in which case we set
$\ell(x)=0$. The corresponding cross 
section of $A+L_{\beta u}$ is a line segment of length
$\ell(x)+2\beta$.  The corresponding cross 
section of $A-L_{\beta u}$ is either a centered
interval of length $\ell(x)-2\beta$, a single point, or empty.
The claims follow upon integration over $x\in u^\perp\cap B_R$.
\end{proof}

\smallskip

\begin{lem}
\label{lem:box}
Let $A\subset\RR^n$ be a compact set, and let $R>0$.
For a finite collection of unit vectors 
$u_1,\dots, u_k\in\RR^n$ and $\beta_1,\dots, \beta_k\ge 0$, set
$$
A_k=S_{u_k}\dots S_{u_1} A\,,\qquad
C_k= \sum_{i\le k} S_{u_k} ...S_{u_{i+1}} L_{\beta_i u_i}\,,
$$
where $L_{\beta_i u_i}$ is a line segment as
in Lemma~\ref{lem:line}. Then 
\begin{align*}
\Vol((A_k+C_k)\cap B_R)
&\ \leq \ \Vol(A\cap B_R)+ 2 \omega_{n-1}R^{n-1} 
\sum_{i\le k} \beta_i\,, \\
\Vol((A_k-C_k)\cap B_R)
&\ \geq \ \Vol(A\cap B_R)- 2 \omega_{n-1}R^{n-1} 
\sum_{i\le k} \beta_i\,.
\end{align*}
\end{lem}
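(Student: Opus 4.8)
The plan is to prove both inequalities together by induction on $k$, peeling off the outermost symmetrization $S_{u_k}$ at each stage and applying Lemma~\ref{lem:line} to a set that is fixed by $S_{u_k}$. The base case $k=0$ is immediate: there the empty sum is $C_0=\{0\}$, so $A_0\pm C_0=A$. For the inductive step I first record that $A_k=S_{u_k}A_{k-1}$ and, splitting off the $i=k$ term from $C_k$,
$$C_k\;=\;L_{\beta_k u_k}+\sum_{i\le k-1}S_{u_k}\bigl(S_{u_{k-1}}\cdots S_{u_{i+1}}L_{\beta_i u_i}\bigr)\;\subset\;L_{\beta_k u_k}+S_{u_k}C_{k-1}\,,$$
the inclusion coming from iterating~(\ref{eq:M-sum}).

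Applying~(\ref{eq:M-sum}) once more, and using $S_{u_k}L_{\beta_k u_k}=L_{\beta_k u_k}$, this gives
$$A_k+C_k\;\subset\;S_{u_k}(A_{k-1}+C_{k-1})+L_{\beta_k u_k}\,,$$
and dually, by~(\ref{eq:M-diff}) together with the identity $X-(Y+Z)=(X-Y)-Z$ for Minkowski differences,
$$A_k-C_k\;\supset\;S_{u_k}(A_{k-1}-C_{k-1})-L_{\beta_k u_k}\,.$$
In both cases $S_{u_k}(A_{k-1}\pm C_{k-1})$ is the image of a Steiner symmetrization, hence fixed by $S_{u_k}$, so Lemma~\ref{lem:line} applies to it with $u=u_k$ and $\beta=\beta_k$, giving
$$\Vol\bigl((A_k+C_k)\cap B_R\bigr)\;\le\;\Vol\bigl(S_{u_k}(A_{k-1}+C_{k-1})\cap B_R\bigr)+2\omega_{n-1}R^{n-1}\beta_k$$
and the analogous lower bound with $A_{k-1}-C_{k-1}$ in place of $A_{k-1}+C_{k-1}$.

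It then remains to trade $S_{u_k}(A_{k-1}\pm C_{k-1})$ for $A_{k-1}\pm C_{k-1}$ inside the truncating ball, after which the inductive hypothesis closes the estimate. Since $B_R=S_{u_k}B_R$ and Steiner symmetrization preserves inclusions and volume, $S_{u_k}\bigl((A_{k-1}\pm C_{k-1})\cap B_R\bigr)\subset S_{u_k}(A_{k-1}\pm C_{k-1})\cap B_R$, and hence $\Vol\bigl(S_{u_k}(A_{k-1}\pm C_{k-1})\cap B_R\bigr)\ge\Vol\bigl((A_{k-1}\pm C_{k-1})\cap B_R\bigr)$. This is precisely what the lower bound needs, so that inequality closes with nothing further. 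For the upper bound I need the reverse comparison, and here I will use that in the situation at hand $B_R$ already contains $A_{k-1}+C_{k-1}$ — the outradius does not grow under symmetrization and $C_{k-1}$ is a bounded sum of line segments — so that $S_{u_k}(A_{k-1}+C_{k-1})\subset B_R$ as well and the two truncated volumes agree by volume preservation. I expect this to be the only delicate point: Steiner symmetrization can pull mass of a set into $B_R$ from outside, so in general $\Vol(S_uE\cap B_R)$ strictly exceeds $\Vol(E\cap B_R)$, and for the upper estimate one must be sure the intermediate sets remain inside the truncating ball.
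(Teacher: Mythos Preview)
Your induction scheme and the inclusions $A_k+C_k\subset S_{u_k}(A_{k-1}+C_{k-1})+L_{\beta_k u_k}$ and $A_k-C_k\supset S_{u_k}(A_{k-1}-C_{k-1})-L_{\beta_k u_k}$ are exactly what the paper uses, and your treatment of the lower bound is correct: there the inequality $\Vol(S_{u_k}E\cap B_R)\ge \Vol(E\cap B_R)$ goes the right way. The problem is precisely where you anticipated it. You assert that $A_{k-1}+C_{k-1}\subset B_R$, but the lemma carries no hypothesis linking $R$ to $A$, and even under the tacit assumption $A\subset B_R$ (which is how the lemma is used later), the set $C_{k-1}$ is a Minkowski sum of segments of half-lengths $\beta_1,\dots,\beta_{k-1}$ with no size restriction whatsoever; for large $\beta_i$ the set $A_{k-1}+C_{k-1}$ sticks out of $B_R$. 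So the step $\Vol(S_{u_k}(A_{k-1}+C_{k-1})\cap B_R)\le \Vol((A_{k-1}+C_{k-1})\cap B_R)$ simply fails, and with it the upper estimate.

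The paper avoids this by truncating \emph{before} symmetrizing: rather than working with $S_{u_k}(A_{k-1}+C_{k-1})$, it passes to $S_{u_k}\bigl((A_{k-1}+C_{k-1})\cap B_R\bigr)$, which is automatically contained in $B_R$ because $B_R$ is invariant under $S_{u_k}$. One then bounds
\[
\Vol((A_k+C_k)\cap B_R)\ \le\ \Vol\bigl(S_{u_k}((A_{k-1}+C_{k-1})\cap B_R)+L_{\beta_k u_k}\bigr),
\]
and since the projection of $S_{u_k}((A_{k-1}+C_{k-1})\cap B_R)$ onto $u_k^\perp$ lies in the disk of radius $R$, the argument of Lemma~\ref{lem:line} yields the increment $2\omega_{n-1}R^{n-1}\beta_k$ while volume preservation returns $\Vol((A_{k-1}+C_{k-1})\cap B_R)$. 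The moral is that the intersection with $B_R$ must be carried through each step of the induction, not imposed only at the end.
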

\begin{proof}
We proceed by induction on $k$.
Lemma~\ref{lem:line} settles the base case $k=1$.
Let $1<k\le n$, and
suppose the first claim holds for $k-1$.
By Eq.~\eqref{eq:M-sum},
$$
A_k+C_k  \ \subset \ S_{u_k} (A_{k-1}+C_{k-1})  + L_{\beta_ku_k}\,.
$$
We combine this with
the first inequality of Lemma~\ref{lem:line}
and then apply the inductive hypothesis to obtain
\begin{align*}
\Vol((A_k+C_k)\cap B_R) 
&\ \le \ \Vol(S_{u_k} ((A_{k-1}+C_{k-1})\cap B_R) + L_{\beta_ku_k})\\
&\ \le \ \Vol((A_{k-1}+C_{k-1})\cap B_R) + 2\omega_{n-1}R^{n-1}\beta_k\\
&\ \le \ \Vol(A\cap B_R) +2\omega_{n-1}R^{n-1} \sum_{i\le k}\beta_i \,.
\end{align*}
This completes the induction.  For the
second claim, we argue similarly, using
Eq.~\eqref{eq:M-diff} and the second inequality
of Lemma~\ref{lem:line}.
\end{proof}

\smallskip 
The next lemma gives a lower bound
for the inradius of the parallelepiped
$C_n= \sum_{i\le n} S_{u_n}\dots S_{u_{i+1}}L_{\beta u_i}$.

\begin{lem} \label{lem:ball}
Let $u_1,\dots u_n$ be linearly independent
unit vectors in $\RR^n$, and let $\beta,\rho>0$.
If $\beta \det\,(u_1,\dots, u_k)\ge \rho$,
then
$$
B_\rho\ \subset\ 
\sum_{i\le n}S_{u_n}\dots S_{u_{i+1}} L_{\beta u_i}
\,.
$$ 
\end{lem}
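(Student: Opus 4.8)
The plan is to identify the parallelepiped $C_n$ explicitly and then bound its inradius from below. The first step is the observation that Steiner symmetrization acts on a line segment through the origin by orthogonal projection: if $E=\{sv:|s|\le 1\}$ with $v\ne 0$, and $w$ is a unit vector not parallel to $v$, then every cross section $E(x)$ with $x\in w^\perp$ is either empty or a single point, so $(E(x))^*$ is either empty or $\{0\}$, and hence $S_wE$ is the orthogonal projection of $E$ onto $w^\perp$. I would apply this repeatedly to $L_{\beta u_i}$. At each stage the current segment, produced from $L_{\beta u_i}$ by symmetrizing in some of the directions $u_{i+1},\dots,u_{j-1}$, has direction vector lying in $\mathrm{span}(u_i,\dots,u_{j-1})$ with nonzero $u_i$-component, so by linear independence it is not parallel to the next axis $u_j$ and the observation applies. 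The conclusion is
$$
S_{u_n}\dots S_{u_{i+1}}L_{\beta u_i}=\{sv_i:|s|\le 1\}\,,
$$
where $v_i$ is obtained from $\beta u_i$ by orthogonally projecting onto $u_{i+1}^\perp$, then onto $u_{i+2}^\perp$, and so on up to $u_n^\perp$. I will use two properties of $v_i$: $|v_i|\le\beta$, since orthogonal projections are nonexpansive; and $v_i\in\beta u_i+\mathrm{span}(u_{i+1},\dots,u_n)$, since each projection $x\mapsto x-(x\cdot u_j)u_j$ merely adds a multiple of $u_j$.

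Next I would analyze $C_n=\sum_{i\le n}\{sv_i:|s|\le 1\}$. Working through the columns of $(v_1,\dots,v_n)$ from the last to the first and using the inclusion $v_i\in\beta u_i+\mathrm{span}(u_{i+1},\dots,u_n)$ to subtract multiples of the (already reduced) later columns from column $i$, one reduces this matrix to $(\beta u_1,\dots,\beta u_n)$, so $\det(v_1,\dots,v_n)=\beta^n\det(u_1,\dots,u_n)\ne 0$. Hence $v_1,\dots,v_n$ form a basis, $C_n=\{\sum_{i\le n}s_iv_i:|s_i|\le 1\}$ is a centrally symmetric parallelepiped, and, writing $w_j$ for a unit normal to $\mathrm{span}(v_i:i\ne j)$, one checks that $C_n=\bigcap_{j\le n}\{x:|x\cdot w_j|\le|v_j\cdot w_j|\}$. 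Consequently $B_\rho\subset C_n$ if and only if $\rho\le\min_j|v_j\cdot w_j|$.

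It remains to bound $\min_j|v_j\cdot w_j|$ from below. Since $|v_j\cdot w_j|$ is the distance from $v_j$ to $\mathrm{span}(v_i:i\ne j)$, the ``base times height'' formula for the volume of a parallelepiped gives
$$
|v_j\cdot w_j|=\frac{|\det(v_1,\dots,v_n)|}{\mathrm{Vol}_{n-1}(\{v_i:i\ne j\})}\ \ge\ \frac{\beta^n\,|\det(u_1,\dots,u_n)|}{\beta^{n-1}}=\beta\,|\det(u_1,\dots,u_n)|\,,
$$
where $\mathrm{Vol}_{n-1}$ denotes the $(n-1)$-dimensional volume of the parallelepiped spanned by the listed vectors, and the inequality uses Hadamard's inequality together with $|v_i|\le\beta$. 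By hypothesis the right-hand side is at least $\rho$, and since this holds for every $j$ we conclude $B_\rho\subset C_n$.

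I expect the main obstacle to be the first step: verifying carefully that the iterated symmetrizations collapse each segment to an orthogonal projection, and tracking the vectors $v_i$ together with the linear-independence bookkeeping that keeps the segments one-dimensional throughout. Once the parallelepiped description of $C_n$ and the determinant identity are in hand, the inradius estimate is routine.
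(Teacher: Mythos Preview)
Your proof is correct and complete. The paper proves the same lemma by a different organization: rather than identifying $C_n$ explicitly as the parallelepiped $\{\sum_i s_i v_i : |s_i|\le 1\}$, it argues by induction on $k$, tracking the inradius $\rho_k$ of $C_k=\sum_{i\le k}S_{u_k}\cdots S_{u_{i+1}}L_{\beta u_i}$ inside the subspace $V_k=\mathrm{span}(u_1,\dots,u_k)$ and showing $\rho_k\ge\beta\lambda_k$, where $\lambda_k$ is the $k$-volume of the parallelepiped on $u_1,\dots,u_k$. Both arguments rest on the same key observation---that Steiner symmetrization in direction $u_k$ acts on subsets of a transverse lower-dimensional set as orthogonal projection onto $u_k^\perp$---but deploy it differently. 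The paper applies this to all of $C_{k-1}$ at once and uses the angle $\theta_k$ between $V_{k-1}$ and $u_k$ to control how projection shrinks the inradius; you apply it segment by segment, obtain the vectors $v_i$ explicitly, and then invoke Hadamard's inequality at the end. Your route gives a concrete picture of $C_n$ and makes the appearance of $\det(u_1,\dots,u_n)$ transparent via the column-reduction identity $\det(v_1,\dots,v_n)=\beta^n\det(u_1,\dots,u_n)$; the paper's induction is shorter and sidesteps the face-normal computation.
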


\begin{proof} 
Denote by $V_k$ the subspace spanned 
by $u_1,\dots, u_k$, and set
$$
C_k= \sum_{i\le  k} S_{u_k}\dots S_{u_{i+1}} L_{\beta u_i}\,, \qquad k=1,\dots, n\,.
$$
Let $\rho_k$ be the inradius of $C_k$ (considered
as a subset of $V_k$),
and let $\lambda_k$ be the $k$-dimensional 
measure of the parallelepiped spanned
by $u_1,\dots, u_k$.
We will show by induction on $k$ that $\rho_k\ge \beta \lambda_k$
for $k=1,\dots, n$.

In the base case $k=1$, we have $C_1=L_{\beta u_1}$,
$\rho_1=\beta$, and $\lambda_1=1$.
Let now $1<k\le n$, and suppose we have already shown that 
$\rho_{k-1} \ge \beta \lambda_{k-1}$.
By definition,
$$
C_k=S_{u_k}C_{k-1}+ L_{\beta u_k}\,.
$$
The Steiner symmetrization $S_{u_k}$
acts on subsets of $V_{k-1}$ as the orthogonal
projection onto $u_k^\perp$. 
Let $\theta_k$ be the angle between $V_{k-1}$ and
$u_k$.  The projection onto $u_k^\perp$
shrinks the length of vectors in $V_{k-1}$
by a factor that is no smaller than
$\sin \theta_k$, and shrinks the $(k\!-\!1)$-dimensional volume of subsets
exactly by a factor $\sin \theta_k$.
By the inductive assumption,
$$ 
\rho_{k}\ge \rho_{k-1}\sin\theta_{k-1}
\ge \beta \lambda_{k-1}\sin\theta_{k-1}= \beta \lambda_k\,,
$$
completing the induction.
\end{proof}

\begin{thm}[Volume estimate] \label{thm:vol}
If $A\subset\RR^n$ is a compact set with outradius $R$
and $u_1, ..., u_n$ are linearly independent
unit vectors in $\RR^n$, 
then 
\begin{align*} 
\Vol(S_{u_n}\dots S_{u_1}A + B_\delta)
&\ \le \ \Vol(A) +  
\frac{a_n(R+\delta)^{n-1}\delta}{|\det\,(u_1,\dots, u_n)\,|}\,,\\
\Vol(S_{u_n}\dots S_{u_1}A - B_\delta)
&\ \ge \ \Vol(A) - 
\frac{a_n R^{n-1}\delta}{|\det\,(u_1,\dots, u_n)\,|}
\end{align*}
for every $\delta>0$. Here, $a_n=2n\omega_{n-1}$.
\end{thm}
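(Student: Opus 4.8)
The plan is to feed the parallelepiped $C_n$ of Lemma~\ref{lem:ball} into the volume bounds of Lemma~\ref{lem:box}, tuning the common edge‑length parameter so that $C_n$ swallows $B_\delta$. Concretely, I would set $\beta_i=\beta:=\delta/|\det\,(u_1,\dots,u_n)|$ for every $i\le n$, write $A_n=S_{u_n}\dots S_{u_1}A$ and $C_n=\sum_{i\le n}S_{u_n}\dots S_{u_{i+1}}L_{\beta u_i}$, and invoke Lemma~\ref{lem:ball} with $\rho=\delta$ to obtain $B_\delta\subset C_n$.

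For the first inequality, monotonicity of the outradius under Steiner symmetrization gives $A_n\subset B_R$, hence $A_n+B_\delta\subset A_n+C_n\subset B_{R+\delta}$. Applying the first estimate of Lemma~\ref{lem:box} with the radius $R+\delta$, and using $\Vol(A\cap B_{R+\delta})=\Vol(A)$, bounds $\Vol(A_n+B_\delta)$ by $\Vol(A)+2\omega_{n-1}(R+\delta)^{n-1}n\beta$, which is the asserted bound once $n\beta$ is substituted and $a_n=2n\omega_{n-1}$ is recalled. For the second inequality, the inclusion $B_\delta\subset C_n$ reverses under Minkowski difference, so $A_n-C_n\subset A_n-B_\delta\subset A_n\subset B_R$; intersecting with $B_R$ and applying the second estimate of Lemma~\ref{lem:box} with radius $R$ gives $\Vol(A_n-B_\delta)\ge\Vol((A_n-C_n)\cap B_R)\ge\Vol(A)-2\omega_{n-1}R^{n-1}n\beta$, again the claimed bound.

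Since the substantive work lives in Lemmas~\ref{lem:box} and~\ref{lem:ball}, I do not expect a genuine obstacle here; the only bookkeeping points are to use the slightly larger ball $B_{R+\delta}$ in the outer estimate — the outer parallel set of $A_n$ may protrude past $B_R$, whereas the inner parallel set stays inside $A_n\subset B_R$, so $B_R$ suffices there — and to keep the direction of the inclusion straight when passing from $B_\delta\subset C_n$ to Minkowski differences.
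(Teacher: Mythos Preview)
Your approach is essentially identical to the paper's proof: set $\beta=\delta/|\det(u_1,\dots,u_n)|$, use Lemma~\ref{lem:ball} to obtain $B_\delta\subset C_n$, and then feed this into Lemma~\ref{lem:box} with radii $R+\delta$ and $R$ for the outer and inner estimates respectively. One minor slip: the inclusion $A_n+C_n\subset B_{R+\delta}$ that you write is generally false (the parallelepiped $C_n$ can be much larger than $B_\delta$ when the determinant is small), but it is also unnecessary---what your argument actually uses, and what the paper writes, is $A_n+B_\delta\subset (A_n+C_n)\cap B_{R+\delta}$, which follows from $B_\delta\subset C_n$ together with $A_n\subset B_R$.
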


\begin{proof} 
We apply Lemma~\ref{lem:ball} with
$\beta=\delta / |\det\,(u_1,\dots, u_n)\,|$ to see that
$$
B_\delta \ \subset\ \sum_{i\le n} S_{u_n}\dots S_{u_{i+1}}
L_{\beta u_i}=: C\,.
$$
It follows from the first inequality in Lemma~\ref{lem:box} that 
\begin{align*}
\Vol(S_{u_n}\dots S_{u_1}A+B_\delta) 
&\ \le \ \Vol((S_{u_n}\dots S_{u_1}A+C)\cap B_{R+\delta})\\
&\ \le \ \Vol(A) + 2n\omega_{n-1} (R+\delta)^{n-1}\beta\,,
\end{align*}
proving the first claim. Similarly,  we obtain 
from the second inequality in Lemma~\ref{lem:box} 
\begin{align*}
\Vol(S_{u_n}\dots S_{u_1}A-B_\delta) 
&\ \ge \ \Vol((S_{u_n}\dots S_{u_1}A-C)\cap B_R)\\
&\ \ge \ \Vol(A) - 2n\omega_{n-1} R^{n-1}\beta\,.\\[-1.5cm]
\end{align*} 
\end{proof}

\smallskip The next lemma is not needed
for the proof of the main result. It
will be used at the end of the paper
to turn the volume estimate from Theorem~\ref{thm:vol}
into an inequality for the volume radius of parallel sets.

\begin{lem}
\label{lem:rho}
Let $A$ be a non-empty compact set in $\RR^n$ with $n\ge 2$.
For $\delta>0$, let $\rho(\delta)$ be the volume
radius of $A+B_\delta$, let $\rho(-\delta)$
be the volume radius of $A-B_\delta$,
and let $r$ be the volume radius of $A^*$.
Assume that
\begin{align*} \label{eq:vol-sum}
\Vol(A + B_\delta)
&\ \le \ \Vol(A) +  b(R+\delta)^{n-1}\delta\,,\\
\Vol(A - B_\delta)
&\ \ge \ \Vol(A) - b R^{n-1}\delta
\end{align*}
for all $\delta> 0$, where $b\ge 2\omega_n r^n/R^n$ and $R\ge r$ 
are constants. Then
$$
|\rho(\pm\delta)-r|\le c\delta\,,
$$
where $c=bR^{n-1}/(\omega_n r^{n-1})$.
\end{lem}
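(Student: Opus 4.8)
The plan is to convert the two volume hypotheses into bounds on the volume radii, using $\omega_n\rho(\delta)^n=\Vol(A+B_\delta)$, $\omega_n\rho(-\delta)^n=\Vol(A-B_\delta)$, and $\omega_n r^n=\Vol(A^*)=\Vol(A)$, and then to reduce each of the two desired inequalities to an elementary fact about the power function $t\mapsto t^n$. We may assume $r>0$, since otherwise $c=\infty$ and there is nothing to prove. First I would record the inclusions $A\subset A+B_\delta$ and $A-B_\delta\subset A$, which give $\rho(-\delta)\le r\le\rho(\delta)$; hence the conclusion is equivalent to the two one-sided bounds $\rho(\delta)\le r+c\delta$ and $\rho(-\delta)\ge r-c\delta$.

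For the outer bound, since $t\mapsto t^n$ is increasing it suffices to show $\omega_n(r+c\delta)^n\ge\Vol(A+B_\delta)$, and by the first hypothesis it is enough that $\omega_n\bigl[(r+c\delta)^n-r^n\bigr]\ge b(R+\delta)^{n-1}\delta$. I would prove this from the elementary inequality $x^n-y^n\ge(x-y)\,x^{n-1}$, valid for $x\ge y\ge0$, which gives $(r+c\delta)^n-r^n\ge c\delta\,(r+c\delta)^{n-1}$, combined with the comparison $c\ge r/R$ — equivalent to $bR^n\ge\omega_n r^n$, which follows from $b\ge 2\omega_n r^n/R^n$ — and the identity $b/\omega_n=c\,r^{n-1}/R^{n-1}$ built into the definition of $c$. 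These identities rewrite the right-hand side as $\omega_n c\delta\,(r+r\delta/R)^{n-1}$, so the target inequality becomes $c\delta\,(r+c\delta)^{n-1}\ge c\delta\,(r+r\delta/R)^{n-1}$, which holds because $n\ge 2$ and $c\ge r/R$.

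For the inner bound I would split on the size of $\delta$. If $\delta\ge r/c$, then $r-c\delta\le 0\le\rho(-\delta)$ and there is nothing to prove; note that if $A-B_\delta=\emptyset$ then the second hypothesis forces $0\ge\Vol(A)-bR^{n-1}\delta$, i.e.\ $\delta\ge\omega_n r^n/(bR^{n-1})=r/c$, so this case also covers empty inner parallel sets. If $\delta<r/c$, then $\Vol(A)-bR^{n-1}\delta>0$, and in particular $0<r-c\delta<r$; as before it suffices to check $\omega_n\bigl[r^n-(r-c\delta)^n\bigr]\ge bR^{n-1}\delta$. Setting $x=c\delta/r\in(0,1)$ and using $bR^{n-1}/\omega_n=c\,r^{n-1}$ (again the definition of $c$), this is exactly the inequality $1-(1-x)^n\ge x$ on $[0,1]$, which follows from concavity of $x\mapsto 1-(1-x)^n$ together with its vanishing at $x=0$ and $x=1$.

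The step I expect to require the most care is not any single inequality but the choice of elementary estimate: the obvious linearization $\rho(\delta)-r\le(\rho(\delta)^n-r^n)/(n r^{n-1})$ from convexity controls the right-hand side only for small $\delta$, because $(R+\delta)^{n-1}$ is not bounded by a fixed multiple of $R^{n-1}$. Working instead with $x^n-y^n\ge(x-y)x^{n-1}$ and the comparison $c\ge r/R$ is precisely what makes the bounds uniform in $\delta$. The remaining points — resolving the absolute values, the regime $\delta\ge r/c$, and the possibility of an empty inner parallel set — are then routine.
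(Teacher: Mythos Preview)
Your proof is correct and follows essentially the same strategy as the paper: translate the volume hypotheses into the inequalities $\omega_n(r+c\delta)^n\ge\Vol(A+B_\delta)$ and $\omega_n(r-c\delta)^n\le\Vol(A-B_\delta)$, and verify each via an elementary estimate on $t\mapsto t^n$. The only notable difference is in the outer bound: the paper writes $(r+c\delta)^n-r^n=n\int_0^{c\delta}(r+s)^{n-1}\,ds$ and applies Jensen's inequality to get $\ge nc\delta\,(r+c\delta/2)^{n-1}$, which then needs $c\ge 2r/R$; you instead use $(r+c\delta)^n-r^n\ge c\delta\,(r+c\delta)^{n-1}$ directly, which needs only $c\ge r/R$. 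For the inner bound both arguments reduce to the same inequality $r^n-\rho^n\ge (r-\rho)\,r^{n-1}$, and your treatment of the edge cases $\delta\ge r/c$ and $A-B_\delta=\emptyset$ is more explicit than the paper's.
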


\begin{proof} Note that $c\ge 2r/R$.
By Jensen's inequality,
\begin{align*}
\Vol(A^*+B_{c\delta})-\Vol(A)
&\ = \ n\omega_n \int_0^{c\delta} (r+s)^{n-1}\, ds\\
&\ \ge \ cn\omega_n r^{n-1} \delta \left(1+\tfrac{c\delta}{2r}\right)^{n-1}\\
&\ \ge \ bR^{n-1} \delta \left(1+\tfrac{\delta}{R}\right)^{n-1}\\
&\ \ge \ \Vol(A+B_\delta)-\Vol(A)\\
&\ =\  \Vol(B_{\rho(\delta)})-\Vol(A)\,,
\end{align*}
where the last two steps used
the assumption on $A+B_\delta$ and the definition of
$\rho(\delta)$. It follows that $A^*+B_{c\delta} \supset B_{\rho(\delta)}$,
which gives the claim for $t>0$.
On the other hand, the assumption on $A-B_\delta$ implies that
$$
\omega_n r^{n-1}(r-\rho(-\delta))
\le \Vol(B_{\rho(-\delta)}) \le bR^{n-1}\delta\,,
$$
which gives the $\rho(-\delta)$.
\end{proof}

\begin{proof}[Proof of Theorem~\ref{thm:per}] 
The first inequality of Theorem~\ref{thm:vol}
yields for the outer Minkowski perimeter
\begin{align*}
\PerM^+(S_{u_n}\dots S_{u_1}A)
&\ =\ \limsup_{\delta\to 0} \frac{1}{\delta}
(\Vol(S_{u_n}\dots S_{u_1}A+B_\delta) -\Vol(A))\\
& \ \le \ \frac{a_nR^{n-1}}
{|\det\,(u_1,\dots, u_n)\,|}\,.
\end{align*}
The proof is completed with the lemma below.
\end{proof}

\smallskip

\begin{lem} \label{lem:perper}
If $A\subset\RR^n$ is compact, then
$\Per(A) \le \PerM^+(A)$.
\end{lem}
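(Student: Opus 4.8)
The plan is to relate the flux $\int_A\operatorname{div}F\,dx$ of an admissible vector field to the rate at which the volume of $A$ grows under the flow of $F$, and then to compare that flow with the outer parallel sets $A+B_\delta$. If $\PerM^+(A)=\infty$ there is nothing to prove, so I assume $\PerM^+(A)<\infty$ and fix a smooth, compactly supported vector field $F$ on $\RR^n$ with $\|F\|_\infty\le 1$.

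First I would introduce the flow $\Phi_t\colon\RR^n\to\RR^n$ generated by $F$, i.e.\ the solution of $\partial_t\Phi_t(x)=F(\Phi_t(x))$ with $\Phi_0=\mathrm{id}$. Since $F$ is smooth and compactly supported, $\Phi_t$ exists for all $t$ and is a diffeomorphism of $\RR^n$ depending smoothly on $(t,x)$. From $\Phi_t(x)-x=\int_0^t F(\Phi_s(x))\,ds$ and $\|F\|_\infty\le1$ one gets $|\Phi_t(x)-x|\le t$, hence $\Phi_t(A)\subset A+B_t$ and therefore $\Vol(\Phi_t(A))\le\Vol(A+B_t)$ for every $t>0$.

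Next I would differentiate $t\mapsto\Vol(\Phi_t(A))$ at $t=0$. By the change-of-variables formula, $\Vol(\Phi_t(A))=\int_A\det D\Phi_t(x)\,dx$; since $\Phi_t=\mathrm{id}$ off the compact set $\mathrm{supp}\,F$, and $D\Phi_t$ depends smoothly on $(t,x)$ with $\partial_t D\Phi_t|_{t=0}=DF$, one has the uniform expansion $\det D\Phi_t(x)=1+t\,\operatorname{div}F(x)+O(t^2)$ on $\mathrm{supp}\,F$ and $\det D\Phi_t\equiv1$ elsewhere. Integrating over the bounded set $A$ gives
\[
\lim_{t\to0^+}\frac{\Vol(\Phi_t(A))-\Vol(A)}{t}=\int_A\operatorname{div}F\,dx .
\]
Combining this with $\Vol(\Phi_t(A))\le\Vol(A+B_t)$ yields
\[
\int_A\operatorname{div}F\,dx=\lim_{t\to0^+}\frac{\Vol(\Phi_t(A))-\Vol(A)}{t}\le\limsup_{t\to0^+}\frac{\Vol(A+B_t)-\Vol(A)}{t}=\PerM^+(A),
\]
and taking the supremum over all admissible $F$ gives $\Per(A)\le\PerM^+(A)$.

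The only point requiring care is the uniform Jacobian expansion that justifies differentiating $\Vol(\Phi_t(A))$ under the integral sign; this is routine, since $F$ is smooth with compact support, so all $x$-derivatives of $\Phi_t$ are bounded locally uniformly in $t$ and $\operatorname{tr}DF=\operatorname{div}F$. No regularity of $A$ beyond measurability is used, which is automatic because $A$ is compact. (This is the standard fact that the Caccioppoli perimeter is dominated by the upper outer Minkowski content.)
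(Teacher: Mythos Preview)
Your argument is correct, but it proceeds along a genuinely different line from the paper. The paper's proof (credited to Ambrosio) applies the coarea formula to the distance function $f(x)=\dist(x,A)$: since $|\nabla f|=1$ a.e.\ on $(A+B_\delta)\setminus A$, one gets $\Vol(A+B_\delta)-\Vol(A)=\int_0^\delta \Per(A+B_t)\,dt$, hence $\PerM^+(A)\ge \liminf_{\delta\to 0}\Per(A+B_\delta)$, and then the lower semicontinuity of the Caccioppoli perimeter under $L^1$-convergence of $A+B_\delta$ to $A$ yields $\PerM^+(A)\ge\Per(A)$. You instead push $A$ forward by the flow $\Phi_t$ of an arbitrary test field $F$, use $\|F\|_\infty\le 1$ to embed $\Phi_t(A)$ in $A+B_t$, and differentiate the Jacobian to recover $\int_A\operatorname{div}F$ directly. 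Your route is more self-contained, since it avoids both the coarea formula and the lower semicontinuity of perimeter, working straight from the variational definition of $\Per$; the paper's route, on the other hand, produces the slightly sharper intermediate inequality $\PerM^+(A)\ge\liminf_{\delta\to 0}\Per(A+B_\delta)$ and uses tools (parallel sets, coarea) already in play elsewhere in the paper.
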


\begin{proof}[Proof (L. Ambrosio)] Apply the coarea formula 
(see~\cite[Theorem 13.1]{M}) to the function
$f(x)={\rm dist}\, (x,A)$, which is clearly Lipschitz continuous, and hence
differentiable almost everywhere.
Since $|\nabla f|=1$ a.e. outside $A$
and vanishes a.e. on $A$,
\begin{align*}
\Vol(A+B_\delta) - \Vol(A) 
&\ =\ \int_{A+B_\delta}|\nabla f(x) | \,dx \\
&\ =\ \int_0^\delta \Per(A+B_t)\, dt\\
&\ \ge \   \delta \cdot \inf_{0<t<\delta} \Per(A+B_t)\,.  
\end{align*}
In the second step, we have used the coarea formula
and observed that
$f^{-1}(t)=\partial (A+B_t)$ for $t>0$.
We now divide by $\delta$ and take $\delta\to 0$. Since $A$ is compact,
the parallel set $A+B_\delta$ converges
to $A$ in symmetric difference. It
follows from the lower semicontinuity of the perimeter that
\begin{align*}
\PerM^+(A) &\ = \ \limsup_{\delta\to 0}
\frac{1}{\delta}(\Vol(A+B_\delta) - \Vol(A)) \\
&\ \geq\  \liminf_{\delta\to 0} \Per(A+B_\delta)\\
&\ \geq\  \Per(A)\,.
\end{align*} 
This concludes the proof of the main result.
\end{proof}

\smallskip There are various notions of perimeter,
which agree for open sets with smooth boundary but may differ 
for less regular sets (see~\cite{ACV} for some
recent results).  In particular, $\Per(A)$ can be much 
smaller than the $(n\!-\!1)$-dimensional 
Hausdorff measure of the topological boundary of $A$.  
Having established Eq.~\eqref{eq:per} 
for the Caccioppoli perimeter, we wish to extend the inequality to 
another commonly used measure of the size of the boundary.

The {\bf two-sided Minkowski perimeter} of a compact
set $A$ is defined by
$$
\PerM(A) = \limsup_{\delta\to 0} \frac{1}{2\delta}
\Vol(\partial A+B_\delta)\,,
$$
where $\partial A$ is the boundary of $A$.
It is not hard to show, using a Vitali covering argument,
that the $(n\!-\!1)$-dimensional Hausdorff measure
of $\partial A$ is bounded by $2\cdot 3^{n-1} 
(\omega_{n-1}/\omega_n)\,\PerM(A)$,
but it is not clear (to us) whether the bound holds
without the constant factor.  The last
lemma will be used to relate $\partial A$ to 
the outer and inner parallel sets of~$A$.

\begin{lem} 
\label{lem:dA+C}
For any pair of compact sets $A,C\subset\RR^n$
and every $\delta>0$,
$$
\partial A + C \ \subset \ (A+C)\setminus\,{\rm interior}\,(A-C^-)\,,
$$
where $C^-=\{-c: c\in C\}$ is the reflection of $C$ through
the origin. If $C$ is connected, then the converse
inclusion also holds.
\end{lem}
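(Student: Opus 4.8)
The plan is to prove the two inclusions separately. The forward inclusion $\partial A+C\subset (A+C)\setminus{\rm interior}(A-C^-)$ will hold for arbitrary compact $A,C$ and in fact uses only that $A$ is closed; the reverse inclusion will use that $C$ is connected. For the forward inclusion I would fix $x\in\partial A$ and $c\in C$ and set $z=x+c$. Since $\partial A\subset A$, we get $z\in A+C$ for free. For the second requirement, recall that a point $w$ lies in $A-C^-$ exactly when $w-c'\in A$ for every $c'\in C$. If $z$ were in ${\rm interior}(A-C^-)$, then $z+B_\eps\subset A-C^-$ for some $\eps>0$, and specializing to $c'=c$ would give $x+B_\eps=(z+B_\eps)-c\subset A$, contradicting $x\in\partial A$. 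Nothing about $C$ beyond $c\in C$ is used here, so this half is short.

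For the reverse inclusion, suppose $C$ is connected and take $y\in (A+C)\setminus{\rm interior}(A-C^-)$; the goal is to produce some $c\in C$ with $y-c\in\partial A$, for then $y=(y-c)+c\in\partial A+C$. I would partition $C$ into $C_{\rm in}=\{c\in C: y-c\in{\rm interior}(A)\}$, $C_{\rm out}=\{c\in C: y-c\notin A\}$, and $C_\partial=\{c\in C: y-c\in\partial A\}$ — a genuine partition because $A$ is closed — and note that $C_{\rm in}$ and $C_{\rm out}$ are relatively open in $C$, being preimages of open sets under the continuous map $c\mapsto y-c$. Assume for contradiction that $C_\partial=\emptyset$. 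Since $y\in A+C$ there is $c_0$ with $y-c_0\in A$; by the contradiction hypothesis $y-c_0\notin\partial A$, so $c_0\in C_{\rm in}$, hence $C_{\rm in}\neq\emptyset$, and connectedness of $C$ forces $C=C_{\rm in}$, i.e., $y-c\in{\rm interior}(A)$ for every $c\in C$.

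It remains to see that this last statement forces $y\in{\rm interior}(A-C^-)$, which is the contradiction. Here compactness of $C$ enters: the map $c\mapsto\dist(y-c,\RR^n\setminus A)$ is continuous (it is $1$-Lipschitz as a function of $y-c$) and strictly positive on $C$, hence bounded below by some $\eps>0$, so $(y-c)+B_\eps\subset A$ for every $c\in C$; translating this back gives $y+B_\eps\subset A-C^-$. I expect the reverse inclusion to be the main obstacle — specifically, combining the connectedness of $C$ (which is exactly what guarantees $\partial A$ is met, rather than $y-c$ slipping from the interior of $A$ directly to its complement) with the compactness of $C$ (needed to upgrade pointwise interiority to a uniform neighbourhood of $y$); by contrast the forward inclusion is a one-line unwinding of the definition of the Minkowski difference.
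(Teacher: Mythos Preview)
Your proof is correct and follows essentially the same route as the paper's. For the reverse inclusion the paper uses the dual formulation---two \emph{closed} sets $C_1=\{c:y-c\in A\}$ and $C_2=\{c:y-c\notin\mathrm{interior}(A)\}$ covering $C$, which must intersect by connectedness---and leaves implicit the compactness argument (needed to verify $C_2\ne\emptyset$) that you spell out explicitly.
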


\begin{proof}
Clearly, $\partial A+C\subset A+C$.
We need to show that $\partial A+C$ does not
intersect the interior of $A-C^-$. 
Suppose that $x$ lies in the interior of $A-C^-$. 
Then there exists $\delta>0$ such that
$B_\delta(x)\subset A-C^-$. This means that
$B_\delta(x-c)\subset A$, i.e., 
$\dist(x-c, \partial A) \ge \delta$
for every $c\in C$. We conclude that
$x$ cannot lie in $\partial A+C$.

For the reverse inclusion, assume furthermore 
that $C$ is connected. Let
$x\in (A+C)\setminus {\rm interior}\, (A-C^-)$,
and consider
\begin{align*}
C_1&=\{c\in C: x-c\in A\}\,,\\
C_2&=\{c\in C: x-c\not \in \, {\rm interior}\, (A)\}\,.
\end{align*}
By definition, $C_1$ and $C_2$ are closed and cover $C$.
Furthermore, $C_1$ is non-empty because $x\in A+C$,
and $C_2$ is non-empty because
$x\not\in {\rm interior}\,(A-C^-)$.
Since $C$ is connected, $C_1$ and $C_2$ cannot be disjoint.
Pick $c\in C_1\cap C_2$. Then $x-c\in \partial A$, 
i.e., $x\in\partial A+C^-$, as claimed.
\end{proof}

In the special case where $C=B_\delta$, the lemma implies that
\begin{equation}\label{eq:dA+B}
\Vol(\partial A + B_\delta)
=\Vol(A+B_\delta) - \Vol(A-B_\delta)\,,
\end{equation}
because the boundary of $A-B_\delta$, which consists of all points
having distance exactly $\delta$ from
the complement of $A$, is a set of volume zero.
Combining Eq.~\eqref{eq:dA+B}
with Theorem~\ref{thm:vol}, we obtain
\begin{align*}
\Vol(\partial S_{u_n}\dots S_{u_1}A + B_\delta)
\hspace{-2cm}\\
&=\Vol(S_{u_n}\dots S_{u_1}A+B_\delta) - 
\Vol(S_{u_n}\dots S_{u_1}A-B_\delta)\\
&\le \frac{4n\omega_{n-1}(R+\delta)^{n-1}\delta}
{|\det\,(u_1,\dots, u_n)\,|}\,.
\end{align*}
Dividing by $2\delta$ and taking $\delta\to 0$
extends Eq.~\eqref{eq:per}
to the two-sided Minkowski perimeter.

\begin{cor} Under the assumptions of Theorem~\ref{thm:per},
$$
\PerM(S_{u_n}\dots S_{u_1}A) \ \le \
\frac{a_nR^{n-1}}{|\det\,(u_1,\dots, u_n)\,|}\,.
$$
\end{cor}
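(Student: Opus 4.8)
The plan is to reduce the two-sided Minkowski perimeter of the symmetrized set to a difference of volumes of its outer and inner parallel sets, and then to invoke the Volume estimate of Theorem~\ref{thm:vol}. This is precisely the mechanism behind Eq.~\eqref{eq:dA+B} and the short computation that follows it, so once those ingredients are assembled in the right order the corollary is essentially immediate.

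First I would specialize Lemma~\ref{lem:dA+C} to $C=B_\delta$. Since $B_\delta$ is connected and equals its own reflection $B_\delta^-$, both inclusions of the lemma hold, giving the set identity
$$
\partial A+B_\delta\ =\ (A+B_\delta)\setminus\,{\rm interior}\,(A-B_\delta)
$$
for every compact $A\subset\RR^n$ and every $\delta>0$. Next I would check that $\partial(A-B_\delta)$ is Lebesgue-null: the inner parallel set is the superlevel set $\{x:\dist(x,A^c)\ge\delta\}$, its topological boundary is contained in the level set $\{x:\dist(x,A^c)=\delta\}$, and that level set has measure zero because $\dist(\cdot,A^c)$ is $1$-Lipschitz with $|\nabla\dist(\cdot,A^c)|=1$ almost everywhere on $\{\dist(\cdot,A^c)>0\}$, whereas the gradient of a Lipschitz function vanishes almost everywhere on any level set of positive measure. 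Combining the displayed set identity with this null-boundary fact gives Eq.~\eqref{eq:dA+B}, namely $\Vol(\partial A+B_\delta)=\Vol(A+B_\delta)-\Vol(A-B_\delta)$.

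Then I would apply Eq.~\eqref{eq:dA+B} with $A$ replaced by $S_{u_n}\dots S_{u_1}A$ and feed in both inequalities of Theorem~\ref{thm:vol}: bounding the outer parallel volume from above and the inner parallel volume from below, the two copies of $\Vol(A)$ cancel and one is left with
$$
\Vol(\partial S_{u_n}\dots S_{u_1}A+B_\delta)\ \le\ \frac{a_n\big((R+\delta)^{n-1}+R^{n-1}\big)\delta}{|\det\,(u_1,\dots,u_n)\,|}\,.
$$
Dividing by $2\delta$ and letting $\delta\to0$, so that the bracket in the numerator tends to $2R^{n-1}$, yields $\PerM(S_{u_n}\dots S_{u_1}A)\le a_nR^{n-1}/|\det\,(u_1,\dots,u_n)\,|$, which is the assertion.

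The only step that is not routine is the null-boundary claim in the second paragraph, and even that is standard once one recalls the regularity of the Euclidean distance function. The point to be careful about is that we need $\Vol(\partial(A-B_\delta))=0$ for \emph{every} $\delta>0$, not merely for almost every $\delta$, since $\PerM$ is built from a $\limsup$ as $\delta\to0$ rather than from an integral over $\delta$; the almost-everywhere version would follow at once from the coarea formula, but the gradient argument above gives the stronger statement we actually use.
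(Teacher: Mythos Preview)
Your proposal is correct and follows essentially the same route as the paper: specialize Lemma~\ref{lem:dA+C} to $C=B_\delta$ to obtain Eq.~\eqref{eq:dA+B}, combine with the two inequalities of Theorem~\ref{thm:vol}, divide by $2\delta$, and let $\delta\to 0$. The only cosmetic difference is that the paper majorizes $R^{n-1}$ by $(R+\delta)^{n-1}$ to write the bound as $4n\omega_{n-1}(R+\delta)^{n-1}\delta/|\det(u_1,\dots,u_n)|$, whereas you keep the two terms separate; and you supply a complete justification (via $|\nabla\dist(\cdot,A^c)|=1$ a.e.\ on $\{\dist(\cdot,A^c)>0\}$ together with the vanishing of the gradient on level sets) for the null-boundary claim that the paper merely asserts.
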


\noindent Since $\Per(A)\le \PerM(A)$ by the same 
reasoning as in Lemma~\ref{lem:perper},
this improves upon Theorem~\ref{thm:per}.

\medskip 
Finally, we discuss an application to random sequences 
of Steiner symmetrizations. 
Consider a non-empty compact set $A\subset\RR^n$,
let $r$ be its volume radius, and assume that $A\subset B_R$.
Let $\{U_k\}_{k\ge 0}$ be a sequence of
unit vectors picked independently, uniformly at 
random from the unit sphere in $\RR^n$,
and define recursively
$$
A_0=A\,,\qquad A_{k+1}=S_{U_k} A_k \quad (k\ge 0)\,.
$$
It was recently shown by Burchard and Fortier
that the expectation of the symmetric difference 
from $A_k$ to $A^*$ satisfies
\begin{equation}\label{eq:BF-rate-A}
E(A_k\bigtriangleup  A^*)
\le n\omega_n 2^{n+1}R^n k^{-1}
\end{equation}
for all $k>0$~\cite[Proposition 5.2]{BF}. 
Under suitable regularity assumptions on $\partial A$, 
this can be used to bound the
Hausdorff distance $d_H(\partial A_k,\partial A^*)$,
which controls how much the outradius and inradius
of $A_k$ differ from its volume radius.

We briefly describe the 
tools developed in~\cite[Section 7]{BF}. The authors
consider the auxiliary function
$$
f(x)=\dist(x,\RR^n\setminus A) + (R-\dist(x,A))_+
$$
and its symmetrizations 
$$
F_0=f\,,\qquad F_{k+1}=S_{U_k}F_k\quad (k\ge 0)\,.
$$
By construction, $A_k=\{x: F_k(x)>R\}$.
Using Eqs.~\eqref{eq:M-sum} and~\eqref{eq:M-diff},
they show that
\begin{equation} \label{eq:BF-dH}
d_H(\partial A_k,\partial A^*) \le 
\max_\pm \bigl|\rho(\pm ||F_k-f^*||_\infty)-r\bigr|\,,
\end{equation}
where $\rho(\pm\delta)$ is the volume radius of the 
parallel set $A\pm B_\delta$.
It follows from~\cite[Proposition 5.2]{BF} that
\begin{equation}\label{eq:BF-rate}
E(||F_k-f^*||_\infty)\le 12Rk^{-\frac{1}{n+1}}
\end{equation}
for $k>0$. 
Under the assumption that $A$ has finite Minkowski perimeter, 
they differentiate $\rho$ at $\delta=0$
and obtain from Eqs.~\eqref{eq:BF-dH}
and~\eqref{eq:BF-rate} a sequence
of Steiner symmetrizations along which
$d_H(\partial A_k,\partial A^*) = O(k^{-\frac{1}{n+1}})$ 
as $k\to\infty$.  The rate of convergence estimates in 
Eqs.~\eqref{eq:BF-rate-A} and~\eqref{eq:BF-rate}
are proved by comparing Steiner symmetrization with 
polarization, a simpler rearrangement that preserves 
perimeter as well as volume~\cite[Section 5]{BF}.

We will use Theorems~\ref{thm:per}
and~\ref{thm:vol} to obtain a stronger bound 
on the perimeter of $A_{n+1}$
that results in stronger bounds
on $\rho$ and, through Eq.~\eqref{eq:BF-dH},
on $d_H(\partial A_k,\partial A^*)$.
By Theorem~\ref{thm:per}, 
the perimeter of $A_n$ is almost surely finite,
because the probability that the vectors 
$U_0,\dots, U_{n-1}$ lie in a common hyperplane is
zero.  We next argue that $\Per(A_{n+1})$ has finite
expectation.
Since $\Per(A_{n+1})\le \Per(A_n)$, we can apply
Theorem~\ref{thm:per} to $A_n$ and $A_{n+1}$
to obtain
$$
\Per(A_{n+1})\le a_nR^{n-1} Y_n\,,
$$
where $a_n=2n\omega_{n-1}$, and the random variable $Y_n$ is given by
\begin{equation} \label{eq:def-Y}
Y_n=
\min \left\{ |\det \,(U_1,\dots, U_n)\,|^{-1},
|\det \, (U_0,\dots, U_{n-1})\,|^{-1}\right\}\,.
\end{equation}
As in the proof of Lemma~\ref{lem:ball}, we expand 
$ |\det\,(U_1,\dots U_n)\,|=\prod_{k=2}^n X_k$
and $|\det\,(U_0,\dots U_{n-1})\,|=X_n'\cdot \prod_{k=2}^{n-1} X_k$,
where $X_k$ is the Euclidean distance
of $U_k$ to the subspace of $\RR^n$
spanned by $U_1,\dots U_{k-1}$, and $X_n'$ 
is the distance of $U_0$ to the subspace
spanned by $U_1,\dots, U_{n-1}$. 
Then
$$
Y_n= (\max\{X_n,X_n'\})^{-1}\cdot \prod_{k=2}^{n-1} X_k^{-1}\,.
$$
By rotational invariance, $X_k$ has the same distribution
as the distance of a random point on the 
sphere from a $(k\!-\!1)$-dimensional
coordinate plane,
$X_n'$ has the same distribution as $X_n$,
and $X_2,\dots, X_n, X_n'$ are independent. 
Since the sphere is compact and intersects the coordinate
planes transversally, there exist constants
$b_{n,k}$ such that 
$P(X_k\le t)\le b_{n,k}t^{n-k+1}$ for $2\le k\le n$.
By the independence of $X_n$ and $X_n'$,
it follows that
$P(\max\{X_n,X_n'\}\le t) \le (b_{n,n} t)^2$.
Therefore,
\begin{equation} \label{eq:Yn}
E(Y_n)=E((\max\{X_n,X_n'\})^{-1})
\cdot \prod_{k=2}^{n-1} E(X_k^{-1}) <\infty\,.
\end{equation}
We have proved the following inequality.

\begin{cor} 
\label{cor:n+1}
$$
E(\Per(A_{n+1}))\ \le\ b_n R^{n-1}\,,
$$
where $b_n=2n\omega_{n-1}E(Y_n)$ depends only on the dimension.
\end{cor}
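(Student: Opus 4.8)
The plan is to combine the deterministic bound of Theorem~\ref{thm:per} with the tail estimates for the random determinants. Since Steiner symmetrization never increases the perimeter and never increases the outradius, I can estimate $\Per(A_{n+1})$ in two different ways. Writing $A_{n+1}=S_{U_n}\cdots S_{U_1}(S_{U_0}A)$ and applying Theorem~\ref{thm:per} to the compact set $S_{U_0}A$, whose outradius is at most $R$, gives $\Per(A_{n+1})\le a_nR^{n-1}|\det(U_1,\dots,U_n)|^{-1}$; on the other hand $\Per(A_{n+1})\le\Per(A_n)$, and Theorem~\ref{thm:per} applied to $A$ with directions $U_0,\dots,U_{n-1}$ bounds $\Per(A_n)$ by $a_nR^{n-1}|\det(U_0,\dots,U_{n-1})|^{-1}$. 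Taking the smaller of the two estimates yields $\Per(A_{n+1})\le a_nR^{n-1}Y_n$ with $Y_n$ as in~\eqref{eq:def-Y}, so the corollary reduces to the claim $E(Y_n)<\infty$, from which $E(\Per(A_{n+1}))\le a_nR^{n-1}E(Y_n)=b_nR^{n-1}$.

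To prove $E(Y_n)<\infty$ I would expand both determinants by iterated Gram--Schmidt, exactly as in the proof of Lemma~\ref{lem:ball}: since $|\det|$ is invariant under permuting its columns, $|\det(U_1,\dots,U_n)|=\prod_{k=2}^nX_k$ and $|\det(U_0,\dots,U_{n-1})|=X_n'\prod_{k=2}^{n-1}X_k$, where $X_k$ is the distance of $U_k$ from $\mathrm{span}(U_1,\dots,U_{k-1})$ and $X_n'$ is the distance of $U_0$ from $\mathrm{span}(U_1,\dots,U_{n-1})$. The common factor $\prod_{k=2}^{n-1}X_k$ cancels in the minimum, leaving $Y_n=(\max\{X_n,X_n'\})^{-1}\prod_{k=2}^{n-1}X_k^{-1}$. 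Rotational invariance shows that the law of $X_k$ equals that of the distance from a uniform point of the unit sphere to a fixed $(k\!-\!1)$-dimensional coordinate plane and, by conditioning successively on $U_1,\dots,U_{k-1}$ (and on all of $U_1,\dots,U_{n-1}$ for $X_n$ and $X_n'$), that $X_2,\dots,X_n,X_n'$ are independent with $X_n'$ distributed as $X_n$. Since the sphere meets each coordinate plane transversally, $P(X_k\le t)\le b_{n,k}t^{\,n-k+1}$ for $0<t\le1$; hence $E(X_k^{-1})<\infty$ for $2\le k\le n-1$, because the exponent $n-k+1$ is then at least $2$ and $\int_1^\infty s^{-(n-k+1)}\,ds<\infty$. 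For the last factor, $P(\max\{X_n,X_n'\}\le t)=P(X_n\le t)^2\le b_{n,n}^2t^2$, so $E((\max\{X_n,X_n'\})^{-1})<\infty$ as well. By independence, $E(Y_n)=E((\max\{X_n,X_n'\})^{-1})\prod_{k=2}^{n-1}E(X_k^{-1})<\infty$, which completes the argument.

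The one genuine obstacle is the behaviour of the final Gram--Schmidt distance: $P(X_n\le t)$ decays only linearly in $t$, so $E(X_n^{-1})=\infty$ and no single length-$n$ chain $S_{U_{n-1}}\cdots S_{U_0}A$ has a perimeter bound with finite expectation. This is exactly why the corollary is phrased for $A_{n+1}$ rather than $A_n$: the extra symmetrization allows the minimum over the two orderings of the directions, which replaces $X_n^{-1}$ by $(\max\{X_n,X_n'\})^{-1}$, and the product of two independent linear tails is quadratic, hence integrable. Everything else is routine: the permutation invariance of the determinant, the mutual independence of the $X_k$ via iterated conditioning, and the uniform validity on $(0,1]$ of the power-law tail bounds, which follows from compactness of the sphere together with transversality.
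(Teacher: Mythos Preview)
Your proposal is correct and follows essentially the same route as the paper: bound $\Per(A_{n+1})$ by $a_nR^{n-1}Y_n$ via the two applications of Theorem~\ref{thm:per}, factor both determinants through the common Gram--Schmidt product $\prod_{k=2}^{n-1}X_k$ to get $Y_n=(\max\{X_n,X_n'\})^{-1}\prod_{k=2}^{n-1}X_k^{-1}$, and use rotational invariance, independence, and the transversality tail bounds to conclude $E(Y_n)<\infty$. Your added explanation of why the extra symmetrization is needed---squaring the linear tail of $X_n$ to make $(\max\{X_n,X_n'\})^{-1}$ integrable---is a point the paper leaves implicit.
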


\smallskip We want to apply Eqs.~\eqref{eq:BF-dH} and~\eqref{eq:BF-rate}
to the conditional expectation $E(\cdot\  \vert\ A_{n+1})$.
Let $\tilde f$, $\tilde F_k$, and $\tilde\rho$
be the functions corresponding to $f$, $F_k$, and $\rho$ 
with $A_{n+1}$ in place of $A$. 
Replacing Theorem~\ref{thm:per} with Theorem~\ref{thm:vol}
in the proof of Corollary~\ref{cor:n+1}, we obtain for every $\delta>0$,
\begin{align*} 
\Vol(A_{n+1}+B_\delta)
&\le \Vol(A) + a_n (R+\delta)^{n-1}\delta Y_n\,,\\
\Vol(A_{n+1}+B_\delta)
&\ge \Vol(A) - a_n R^{n-1}\delta Y_n\,,
\end{align*}
where $a_n=2n\omega_{n-1}$,
and $Y_n$ is the random variable 
from Eq.~\eqref{eq:def-Y}.  Since 
$a_n\ge 2\omega_n$, $Y_n\ge 1$,
and  $R\ge r$, 
the assumptions of Lemma~\ref{lem:rho} are satisfied
with $b=a_nY_n$, and so
$$
|\tilde\rho(\pm \delta)-r|
\le \frac{a_nR^{n-1}}{\omega_n r^{n-1}}\delta\,.
$$
It follows that
\begin{align*}
d_H(\partial A_{n+1+k},\partial A^*) 
&\  = \ \max_{\pm} \bigl |\tilde\rho(\pm ||\tilde F_k-f^*||_\infty) -r\bigr|\\
& \ \le \ \frac{a_nR^{n-1}}{\omega_n r^{n-1}}Y_n\,||\tilde F_k-f^*||_\infty
\end{align*}
for $k>0$, see Eq.~\eqref{eq:BF-dH}.
Since $Y_n$ is independent of $U_k$ for $k>n$
and $\tilde F_k$ depends on $U_0,\dots, U_n$
only through $A_{n+1}$, we can invoke the Markov property
to obtain 
\begin{align*}
E(d_H(\partial A_{n+1+k},\partial A^*)) 
&\ =\ E(E(d_H(\partial A_{n+1+k},\partial A^*)\ \vert\  U_0,\dots, U_n))\\
&\ \le\ \frac{a_nR^{n-1}}{\omega_n r^{n-1}}
E( Y_n \, E(||\tilde F_k-f^*||_\infty \ \vert\ A_{n+1}))\\
&\ \le\ \frac{12a_n R^{n-1}}{\omega_n r^{n-1}} \, E(Y_n)
\, R \, k^{-\frac{1}{n+1}}
\end{align*}
for $k>0$.  In the last line, we have 
applied Eq.~\eqref{eq:BF-rate} to 
$\tilde F_k$.  By Corollary~\ref{cor:n+1}, the expected value
of $Y_n$ is finite. We shift the index and adjust the constant
to obtain the desired bound on the rate of convergence.

\begin{cor}\label{cor:rate} 
$$ E(d_H(\partial A_k,\partial A^*))
\ \le \ c_n \, (R/r)^{n-1} R \, k^{-\frac{1}{n+1}}\,,
$$
where $c_n=25 n \omega_{n-1} E(Y_n)/\omega_n$.
\end{cor}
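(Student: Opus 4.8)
The plan is to derive the corollary from the bound on $E(d_H(\partial A_{n+1+k},\partial A^*))$ already obtained above, by re-indexing and splitting the range of the index. Recall that that estimate reads
$$
E(d_H(\partial A_{n+1+k},\partial A^*))\ \le\ \frac{12a_nR^{n-1}}{\omega_nr^{n-1}}\,E(Y_n)\,R\,k^{-\frac1{n+1}}\ =\ \tfrac{24}{25}\,c_n\,(R/r)^{n-1}\,R\,k^{-\frac1{n+1}}
$$
for every $k>0$, where $a_n=2n\omega_{n-1}$ and $c_n=25n\omega_{n-1}E(Y_n)/\omega_n$. First I would set $m=n+1+k$, so that this becomes an upper bound for $E(d_H(\partial A_m,\partial A^*))$ featuring $(m-n-1)^{-1/(n+1)}$, valid for all integers $m\ge n+2$.

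For $m\ge25(n+1)$ one has $k=m-n-1\ge\tfrac{24}{25}m$, hence $k^{-1/(n+1)}\le(25/24)^{1/(n+1)}m^{-1/(n+1)}\le\tfrac{25}{24}\,m^{-1/(n+1)}$; inserting this into the displayed bound cancels the prefactor $\tfrac{24}{25}$ and yields exactly $E(d_H(\partial A_m,\partial A^*))\le c_n(R/r)^{n-1}Rm^{-1/(n+1)}$. For $1\le m<25(n+1)$ I would instead use the crude estimate $d_H(\partial A_m,\partial A^*)\le2R$. This holds because $\partial A^*=\{x:|x|=r\}\subset B_R$ (as $r\le R$) and $\partial A_m\subset A_m\subset B_R$: from $A\subset B_R$ it follows by induction that $A_m\subset B_R$, since each cross section of $B_R$ perpendicular to a direction $u_i$ is a centered interval, so Steiner symmetrization, which replaces each cross section by the centered interval of equal length, carries a subset of $B_R$ into $B_R$. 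To see that this crude bound respects the claimed inequality, note that $Y_n\ge1$ and, for $n\ge2$, $n\,\omega_{n-1}/\omega_n\ge1$, whence $c_n\ge25$; therefore $\bigl(c_n(R/r)^{n-1}/2\bigr)^{n+1}\ge(c_n/2)^{n+1}\ge(12.5)^{n+1}\ge25(n+1)>m$, and rearranging gives $2R\le c_n(R/r)^{n-1}Rm^{-1/(n+1)}$. Taking expectations, combining the two ranges, and renaming $m$ as $k$ completes the proof.

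I do not expect a real obstacle, since the substance of the corollary is already present in the inequality for $E(d_H(\partial A_{n+1+k},\partial A^*))$; the remaining work is bookkeeping. The one point needing a little attention is the constant: the starting inequality carries the factor $\tfrac{24}{25}$ rather than something comfortably smaller, so the index shift $k\mapsto m-n-1$ can be absorbed only once $m$ exceeds a threshold of order $25(n+1)$, and one must then confirm that $c_n$ is large enough — here $c_n\ge25$ — for the trivial bound $2R$ to dominate the right-hand side on the complementary finite range $1\le m<25(n+1)$. Both of these reduce to elementary inequalities (for $\omega_{n-1}/\omega_n$, and for $(12.5)^{n+1}$ versus $25(n+1)$), together with the monotonicity of the outradius under Steiner symmetrization recorded earlier in the paper.
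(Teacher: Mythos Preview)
Your proposal is correct and follows exactly the approach the paper indicates: the paper's entire argument for this corollary is the sentence ``We shift the index and adjust the constant to obtain the desired bound on the rate of convergence,'' and you have carried out precisely this index shift and constant adjustment, supplying the details (the threshold $m\ge 25(n+1)$, the crude bound $d_H\le 2R$ for small $m$, and the verification that $c_n\ge 25$ via $n\omega_{n-1}\ge\omega_n$ and $Y_n\ge 1$, both of which the paper already records). There is nothing to add.
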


\smallskip We close with an explicit
bound on the constants $b_n$ and $c_n$
that appear in Corollaries~\ref{cor:n+1} and~\ref{cor:rate}.
We consider separately each of the 
expected values in Eq.~\eqref{eq:Yn}. A routine
spherical integral (conveniently evaluated as
a Gaussian integral 
over $\RR^n$) gives
$E(X_k^{-1})=\frac{(n-k+1)\omega_{n-k+1}}{(n-k)\omega_{n-k}}
\cdot \frac{(n-1)\omega_{n-1}} {n\omega_n}$ for $2\le k<n$.
Collecting terms, we obtain 
$$
\prod_{k=2}^{n-1}
E(X_k^{-1}) \ = \ \frac{ ((n\!-\!1)\omega_{n-1})^{n-1}}{2(n\omega_n)^{n-2}}\,.
$$
A similar integral yields
$ P(X_n \le \sin\alpha) =\frac{(n-1)\omega_{n-1}}{n\omega_n}
\int_{-\alpha}^\alpha
(\cos t)^{n-2}\, dt$.
Using that $X_n$ and $X_n'$ are independent, we estimate 
for $n\ge 3$
$$
E((\max\{X_n,X_n'\})^{-1})  \ \le \ 
1+ \left(\frac{2(n\!-\!1)\omega_{n-1}}{n\omega_n}\right)^2\,.
$$
When $n=3$, this equation holds with equality, 
resulting in $E(Y_3)=\pi$.  In two dimensions, we find that $E(Y_2)\le 2$,
and for $n\to\infty$, we have $\lim n^{-1}\log E(Y_n)=\sqrt{2e}$.
Since $\lim n^{-1}\log (n\omega_{n-1})=-\infty$
and $\lim n^{-1} \log (n\omega_{n-1}/\omega_n)=1$,
we conclude that $b_n$ converges to zero and $c_n$ grows exponentially.

\bigskip\noindent {\bf Acknowledgments.}
This work was partially supported by 
an NSERC Discovery Grant (Burchard) and
an NSERC Canadian Graham Bell Graduate Scholarship (Chambers). 
We would also like to thank 
Luigi Ambrosio for the proof of Lemma \ref{lem:perper}.

\bibliographystyle{amsplain}
\providecommand{\bysame}{\leavevmode\hbox to3em{\hrulefill}\thinspace}
\providecommand{\MR}{\relax\ifhmode\unskip\space\fi MR }
\providecommand{\MRhref}[2]{%
  \href{http://www.ams.org/mathscinet-getitem?mr=#1}{#2}
}
\providecommand{\href}[2]{#2}

\end{document}